\newtheorem{definition}{Definition}[section]
\newtheorem{theorem}[definition]{Theorem}
\newtheorem{proposition}[definition]{Proposition}
\newtheorem{lemma}[definition]{Lemma}
\newtheorem{remark}[definition]{Remark}
\newtheorem{corollary}[definition]{Corollary}
\newtheorem{example}[definition]{Example}
\newcommand{\bP}{\mathbb{P}}
\newcommand{\bR}{\mathbb{R}}
\newcommand{\bE}{\mathbb{E}}
\newcommand{\cF}{\mathcal{F}}
\newcommand{\cG}{\mathcal{G}}
\newcommand{\cL}{\mathcal{L}}
\newcommand{\cS}{\mathcal{S}}
\newcommand{\al}{\alpha}
\newcommand{\be}{\beta}
\newcommand{\de}{\delta}
\newcommand{\ep}{\varepsilon}
\newcommand{\la}{\lambda}
\newcommand{\Ga}{\Gamma}
\newcommand{\sgn}{\mathrm{sgn}}
\numberwithin{equation}{section}
\begin{document}

\frenchspacing

\title{\bf A potential theoretic approach to Tanaka formula for asymmetric L\'evy processes}
\author{Hiroshi TSUKADA
\footnote{Graduate School of Science,
Osaka City University,
Japan.
e-mail: \texttt{hrstsukada@gmail.com}}}
\date{}
\maketitle

\begin{abstract}
In this paper, we shall introduce the Tanaka formula from viewpoint of the Doob--Meyer decomposition.
For symmetric L\'evy processes, if the local time exists, Salminen and Yor \cite{Sal} obtained the Tanaka formula by using the potential theoretic techniques.
On the other hand, for asymmetric stable processes with index $\al \in (1,2)$, we studied in \cite{Tsu} by using It\^o's stochastic calculus and the Fourier analysis.
In this paper, we study the Tanaka formula for asymmetric L\'evy processes via the potential theoretic approach.
We give several examples for important processes.
Our approach also gives the invariant excessive function with respect to the killed process in the case of asymmetric L\'evy processes and it generalized the result in Yano \cite{Yan2}.
\end{abstract}

\section{Introduction}\label{sec1}
In this paper,
we shall focus on local times for L\'evy processes.
It is known that there are several definitions of local times for different stochastic processes, see Geman and Horowitz \cite{Gem}.
Thus, we define a local time $L=\{L^x_t:x\in\bR,t\geq0\}$ for a L\'evy process $X$ by
the occupation density which means
random variables $L=\{L^x_t:x\in\bR,t\geq0\}$ satisfying
for each non-negative Borel measurable function $f$ and $t\geq0$,
\begin{equation*}
\int^t_0 f(X_s) ds = \int_\bR f(a) L^a_t da \quad \text{a.s.},
\end{equation*}
and is chosen as
\begin{equation*}
L^x_t := \limsup_{\ep \downarrow 0}\frac{1}{2\ep}\int^t_0 1_{\{|X_s-x|<\ep\}}ds.
\end{equation*}
A local time is an important amount to study the reflection problem (see, e.g. Chung and Williams \cite{Chu}) and the Ray--Knight theorem (see, e.g. Eisenbaum et al. \cite{Eis}).
In the case of Brownian motions, the Tanaka formula is an important expression to understand these problems.
Thus, in the case of L\'evy processes we expect that the Tanaka formula is a useful tool to consider those problems.

For a real-valued Brownian motion $B=(B)_{t\geq0}$, it is well known that the Tanaka formula holds:
\begin{equation*}
|B_t-x| - |B_0-x| = \int^t_0 \sgn(B_s-x) dB_s + L^x_t,
\end{equation*}
where $L^x_t$ denotes the local time of the Brownian motion at level $x$.
It represents that the local time $L^x$ can be understood as a bounded variation process in the Doob--Meyer decomposition on the positive submartingale $|B-x|$.
Our goal in this paper is to construct the Tanaka formula from the viewpoint of the Doob--Meyer decomposition.

The Tanaka formula has already studied for symmetric stable processes with index $\al\in(1,2)$ by Yamada \cite{Yam}, for symmetric L\'evy processes by Salminen and Yor \cite{Sal}.
In this paper, we are interested in asymmetric L\'evy processes, while the formula has been obtained for asymmetric stable processes in \cite{Tsu}.
We shall make the Tanaka formula for asymmetric L\'evy processes based upon the potential approach as stated in \cite{Sal}.
Moreover, it will clearly extend the original Tanaka formula for Brownian motions to our formula for asymmetric L\'evy processes.

In \cite{Tsu}, we have already obtained the Tanaka formula for asymmetric stable processes with index $\al \in (1,2)$ via It\^o's stochastic calculus.
By using the Fourier transform,
we can obtain the fundamental solution $F$ of the infinitesimal generator $\cL$ for asymmetric stable processes:
\[ \cL F(x) = \de_0(x) \]
where $\de_0$ is the Dirac delta function, in the sense of Schwartz distribution.
We can construct the Tanaka formula for an asymmetric stable process $S =(S_t)_{t\geq0}$
with index $\al \in (1,2)$ by using It\^o's stochastic calculus and the scaling property of stable processes:
\[ F(S_t-x)-F(S_0-x)= \tilde{N}^x_t +L^x_t \]
where the process $(\tilde{N}^x_t)_{t\geq0}$ given by
\[ \tilde{N}^x_t :=\int^t_0\int_{\bR\setminus\{0\}}\{F(S_{s-}-x+h)-F(S_{s-}-x)\}
\tilde{N}(ds,dh) \]
is a square integrable martingale and $L^x_t$ is the local time at level $x$.
Here, $\tilde{N}(ds,dh)$ is the compensated Poisson random measure.
But it is not clear whether a similar representation can be obtained for general L\'evy processes, or not, because it is very difficult to find
the fundamental solution of the infinitesimal generator for L\'evy processes.

Salminen and Yor \cite{Sal} used the potential theoretic approach and constructed the Tanaka formula for a symmetric L\'evy process $X=(X_t)_{t\geq0}$, if the local time exists,
by using the continuous resolvent density $r_q$:
\[ h(X_t-x)-h(x)= \tilde{N}^x_t +L^x_t \]
where $h(x):=\lim_{q\downarrow0}(r_q(0)-r_q(x))$ which is called a renormalized zero resolvent, $\tilde{N}^x_t :=-\lim_{q\downarrow0}M^{q,x}_t$ is a martingale and $L^x_t$ is the local time at level $x$.
But the expression of the martingale part $\tilde{N}^x_t$ was not given.

In \cite{Yan1} and \cite{Yan2}, Yano obtained an invariant excessive function $h$ with respect to the killed process:
\[ \bE^0_x[h(X_t)]=h(x) \]
where $\bE^0_x$ is the expectation with respect to the law of a L\'evy process $X$ starting at $x$ killed upon hitting zero,
which associates with the Tanaka formula at level zero because the local time for such a process at level zero becomes zero.
In the symmetric case Yano \cite{Yan1} assumed a necessary and sufficient condition for the existence of local times, and Salminen and Yor \cite{Sal} also assumed the same condition,
but in the asymmetric case Yano \cite{Yan2} needed sufficient conditions for the existence of the function and its expression.
Our result also gives the existence and its expression in the asymmetric case under weaker conditions than the ones in \cite{Yan2}.
Our approach is different from Pant\'i \cite{Pan}.

In Section \ref{sec2}, we shall give the preliminaries about resolvent operators of L\'evy processes and a connection between the local time and the resolvent density.
The convergence and its expression of the renormalized zero resolvent are mentioned in Section \ref{sec3}.
In Section \ref{sec4}, the Doob--Meyer decomposition can be constructed in the case of asymmetric L\'evy processes.
And then, we obtain the Tanaka formula for asymmetric L\'evy processes and the invariant excessive function with respect to the killed process.
In Section \ref{sec5}, we give several examples that satisfy the conditions introduced in Section \ref{sec4}.

\section{Preliminaries}\label{sec2}
Let $\cS(\bR)$ be the Schwartz space of rapidly decreasing functions on $\bR$.
We denote the law of processes starting at $x$ and the corresponding expectation by $\bP_x$ and $\bE_x$ respectively.

Consider a L\'evy process $X=(X_t)_{t\geq0}$ on $\bR$ with the L\'evy--Khintchine representation given by
\begin{equation*}
\bE_0[e^{iuX_t}]=e^{t\eta(u)},
\end{equation*}
where the L\'evy symbol $\eta$ of $X$ can be represented as
\begin{equation*}
\eta(u)=ibu-\frac{1}{2}au^2+\int_{\bR \setminus \{0\}}\left( e^{iuy}-1-iuy1_{|y|\leq1} \right) \nu(dy)
\end{equation*}
for constants $b \in \bR$ and $a \geq 0$ and a L\'evy measure $\nu$ on $\bR \setminus \{0\}$ satisfying $\int_{\bR\setminus\{0\}}(|y|^2 \wedge 1)\nu(dy) < \infty$.
We note that the L\'evy symbol $\eta$ is continuous.
Let $\Re\eta$ and $\Im\eta$ be the real and imaginary parts of $\eta$ respectively.
Remark that $\Re\eta\leq0$,
$\Re\eta$ is even and $\Im\eta$ is odd.

Let $T_0$ be the first hitting time to $0$ of $X$:
\[ T_0 := \inf\{t>0 : X_t=0\}. \]
We say that $0$ is regular for itself if $\bP_0(T_0=0)=1$,
and irregular for itself otherwise.
From the Blumenthal zero-one law,
$0$ is irregular if $\bP_0(T_0=0)=0$.

We introduce the following conditions:
\begin{description}
\item[\textbf{(A1)}] The L\'evy symbol $\eta$ satisfies that
\[ \int_\bR \Re \left(\frac{1}{q-\eta(u)}\right)du<\infty, \quad \text{for all $q>0$},\]
\item[\textbf{(A2)}] $0$ is regular for itself.
\end{description}

Denote the resolvent operator of the process $X$ by
\begin{equation*}
R_qf(x):=\bE_x\left[\int^\infty_0 e^{-qt}f(X_t)dt\right], \quad q>0, x \in \bR
\end{equation*}
for all bounded Borel measurable function $f$.
Denote the Fourier transform of $f \in \cS(\bR)$ by
\begin{equation*}
\cF[f](u) := \int_\bR e^{-iux} f(x)dx, \quad u \in \bR,
\end{equation*}
and the inverse Fourier transform by
\begin{equation*}
\cF^{-1}[f](x) := \frac{1}{2\pi} \int_\bR e^{iux} f(u)du, \quad x \in \bR.
\end{equation*}
Then, the resolvent operator is also represented as follows.

\begin{proposition}[{\cite[Proposition I.9]{Bert}}]\label{prop1}
For any $f \in \cS(\bR)$ and $x \in \bR$,
\begin{equation*}
R_qf(x)=\cF^{-1}\left[\frac{1}{q-\eta(u)}\cF[f](u)\right](x), \quad q>0.
\end{equation*}
\end{proposition}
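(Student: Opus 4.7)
The plan is to unwind the definition of $R_q$, exploit the spatial homogeneity of the L\'evy process $X$, and then interchange the order of integration so that the L\'evy--Khintchine identity $\bE_0[e^{iuX_t}]=e^{t\eta(u)}$ can do the work.

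First, I would write
\[
R_qf(x)=\int_0^\infty e^{-qt}\bE_0[f(x+X_t)]\,dt
\]
using translation invariance of L\'evy processes (the law of $X_t$ under $\bP_x$ equals the law of $x+X_t$ under $\bP_0$). Since $f\in\cS(\bR)$, Fourier inversion gives $f(x+X_t)=\frac{1}{2\pi}\int_\bR e^{iu(x+X_t)}\cF[f](u)\,du$. Taking expectations and applying Fubini (the integrand has modulus bounded by $|\cF[f](u)|$, which is integrable), I obtain
\[
\bE_0[f(x+X_t)]=\frac{1}{2\pi}\int_\bR e^{iux}e^{t\eta(u)}\cF[f](u)\,du.
\]

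Next, I would substitute this back into the $t$-integral and swap the time and frequency integrations. The joint integrability required by Fubini follows from $\Re\eta\le0$: the modulus of the integrand is at most $e^{-qt}|\cF[f](u)|$, which lies in $L^1([0,\infty)\times\bR)$ because $\cF[f]\in\cS(\bR)$ and $q>0$. After the swap, the inner time integral evaluates to $\int_0^\infty e^{-(q-\eta(u))t}\,dt=1/(q-\eta(u))$, legitimate since $\Re(q-\eta(u))\ge q>0$ guarantees convergence. This yields
\[
R_qf(x)=\frac{1}{2\pi}\int_\bR e^{iux}\frac{\cF[f](u)}{q-\eta(u)}\,du=\cF^{-1}\!\left[\frac{1}{q-\eta(u)}\cF[f](u)\right](x),
\]
which is the claimed identity.

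The only technical point requiring care is the justification of the two applications of Fubini, and both reduce to the rapid decay of $\cF[f]$ together with the bound $\Re\eta\le0$; hence no substantial obstacle should arise. Observe that condition \textbf{(A1)} is not used at this stage — it will enter only later, when one passes $f$ toward an approximation of a Dirac mass to produce a continuous resolvent density.
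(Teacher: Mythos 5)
Your proof is correct and complete. The paper itself offers no proof of this proposition --- it is quoted directly from Bertoin's book as Proposition I.9 --- and your argument (translation invariance, Fourier inversion of $f\in\cS(\bR)$, two applications of Fubini justified by the rapid decay of $\cF[f]$ and $\Re\eta\le 0$, then evaluation of $\int_0^\infty e^{-(q-\eta(u))t}\,dt$) is exactly the standard derivation given there. Your closing observation is also accurate: no condition beyond $q>0$ is needed here, and \textbf{(A1)} only enters later when one extracts a density from the resolvent kernel.
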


Denote the resolvent kernel by $R_q(x,dy)$ for all $x \in\bR$ such that
\begin{equation*}
R_qf(x)=\int_\bR f(y)R_q(x,dy)
\end{equation*}
for all bounded Borel measurable function $f$.
It is known that the condition \textbf{(A1)} is equivalent to the existence of its density.
See \cite{Bert, Bre, Kes}.
\begin{remark}
\rm{
In \cite[Theorem II.16]{Bert},
the condition \textbf{(A1)} holds if and only if the resolvent kernel $R_q(0,dy)$ is absolutely continuous with respect to the Lebesgue measure and has a bounded density $r_q$.
}
\end{remark}

It is known that the condition \textbf{(A2)} is equivalent to the continuity of its density.
See \cite{Bert, Blu, Bre, Get, Kes}.
\begin{lemma}[{\cite[Theorem II.19]{Bert}}]\label{lem1}
Suppose that the condition \textbf{(A1)} holds.
Then, the followings hold for all $q>0$:
\begin{enumerate}[$(i)$]
\item The condition \textbf{(A2)} holds if and only if there exist a bounded continuous resolvent density $r_q$ such that
\[ R_qf(x)=\int_\bR f(y)r_q(y-x)dy,\]
for all bounded Borel measurable function $f$ and that
\[ \bE_x[ e^{-qT_0}]= \frac{r_q(-x)}{r_q(0)}, \quad x\in\bR. \]
\item If $r_q$ is continuous, then
\[ r_q(0)=\frac{1}{\pi}\int^\infty_0\Re\left(\frac{1}{q-\eta(u)}\right)du, \]
and for all $x \in \bR$
\[ 2r_q(0)-\{r_q(x)+r_q(-x)\}=\frac{2}{\pi}\int^\infty_0 \Re\left(\frac{1-\cos (ux)}{q-\eta(u)}\right)du. \]
\end{enumerate}
\end{lemma}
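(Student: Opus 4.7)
My plan is to establish (i) and (ii) from Proposition \ref{prop1} by Fourier inversion, coupled with a strong Markov argument at the stopping time $T_0$; throughout I assume (A1) so that, by the preceding remark, a bounded density $r_q$ of $R_q(0,dy)$ exists.

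For part (ii), I would start from $\cF[R_qf](u)=\cF[f](u)/(q-\eta(u))$ given in Proposition \ref{prop1}. Writing $R_qf(x)=\int f(y) r_q(y-x)\,dy$ and computing the Fourier transform of both sides shows, at the level of tempered distributions, that $\cF[r_q(-\cdot)](u)=1/(q-\eta(u))$. Direct Fourier inversion is blocked because under (A1) only the real part of $1/(q-\eta)$ is integrable, so I would symmetrise and consider $s_q(x):=r_q(x)+r_q(-x)$, whose Fourier transform is $2\Re(1/(q-\eta(u)))\in L^1(\bR)$. Applying the classical Fourier inversion theorem to $s_q$ (which is continuous under (A2)), and using the evenness of $\Re\eta$ together with the oddness of $\Im\eta$, yields
\[ r_q(x)+r_q(-x)=\frac{2}{\pi}\int_0^\infty \cos(ux)\,\Re\!\left(\frac{1}{q-\eta(u)}\right)du. \]
Specialising at $x=0$ gives the stated formula for $r_q(0)$, and subtracting this identity from $2r_q(0)$ recombines the $(1-\cos(ux))$ factor to give the second display.

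For part (i), the non-trivial direction is ``(A2) $\Rightarrow$ continuous density''. Here I would invoke the strong Markov property at $T_0$ applied to $R_qf$ for $f\in\cS(\bR)$: since $X_{T_0}=0$ on $\{T_0<\infty\}$,
\[ R_qf(x)=\bE_x\!\left[\int_0^{T_0}e^{-qt}f(X_t)\,dt\right]+\bE_x[e^{-qT_0}]\,R_qf(0). \]
Choosing a sequence $f_\ep$ approximating the Dirac delta at $0$ forces the first term to vanish (the process avoids $0$ on $[0,T_0)$), which delivers the pointwise identity $r_q(-x)=\bE_x[e^{-qT_0}]\,r_q(0)$. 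Continuity of $x\mapsto \bE_x[e^{-qT_0}]$ (a general consequence of the strong Markov property at $T_0$ under (A2)) then transfers to $r_q$. Conversely, if a bounded continuous density exists, the same identity evaluated at $x=0$ gives $\bE_0[e^{-qT_0}]=1$, so $\bP_0(T_0=0)=1$, which is (A2).

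The main technical obstacle is the passage from the distributional Fourier-inversion identity to the pointwise statement in (ii), since $1/(q-\eta)$ only lies in $L^1$ after the symmetrisation $r_q+r_q(-\cdot)$; this trick is what makes the inversion rigorous. A secondary subtlety is the $\delta_0$-approximation in the strong Markov step of (i), which is justified because $r_q$ is bounded by (A1) and the contribution of the first term is controlled by the shrinking support of $f_\ep$ together with continuity of $r_q$ away from the issue at $0$.
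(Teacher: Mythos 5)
The paper itself gives no proof of this lemma --- it is quoted from Bertoin [Theorem II.19] --- so your attempt can only be judged against the standard argument. Your part (ii) is correct and is essentially that argument: $r_q(-\cdot)\in L^1$ with $\cF[r_q(-\cdot)](u)=1/(q-\eta(u))$, the symmetrization $s_q=r_q+r_q(-\cdot)$ has the nonnegative integrable Fourier transform $2\Re\bigl(1/(q-\eta)\bigr)$ by \textbf{(A1)}, and classical $L^1$ inversion together with the assumed continuity of $r_q$ gives both displays after using that $\Re\eta$ is even and $\Im\eta$ is odd.

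Part (i) is where the genuine gaps are, and they sit exactly at the point where \textbf{(A2)} has to do work. The crux of the forward direction is your claim that $\bE_x\bigl[\int_0^{T_0}e^{-qt}f_\ep(X_t)\,dt\bigr]\to0$, i.e.\ that the resolvent density of the process killed at $T_0$ vanishes at the point $0$. Your justification (``the process avoids $0$ on $[0,T_0)$,'' plus boundedness of $r_q$ and the shrinking support of $f_\ep$) does not establish this: the process can accumulate occupation time in $(-\ep,\ep)\setminus\{0\}$ before $T_0$, and the limit of $\tfrac{1}{2\ep}\bE_x\bigl[\int_0^{T_0}e^{-qt}1_{\{|X_t|\le\ep\}}dt\bigr]$ is precisely the killed resolvent density at $0$, which is \emph{not} zero in general. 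Indeed, a bounded-variation process with nonzero drift plus a compound Poisson part satisfies \textbf{(A1)} and hits points, yet $0$ is irregular and the identity $\bE_x[e^{-qT_0}]=r_q(-x)/r_q(0)$ fails for it; so any correct proof must use regularity at this step (e.g.\ by writing the occupation integral against the local times $L^a$ and using that $L^0$ does not increase on $[0,T_0)$ together with continuity in $a$ of the killed potential, which itself has to be proved). Second, you deduce continuity of $r_q$ from the asserted continuity of $x\mapsto\bE_x[e^{-qT_0}]$, calling the latter ``a general consequence of the strong Markov property under \textbf{(A2)}.'' It is not: such hitting Laplace transforms are in general only $q$-excessive, and here their continuity is essentially equivalent to the continuity of $r_q$ you are trying to prove, so the argument is circular at this point --- this is the genuinely hard half of Bertoin's theorem. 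By contrast, your converse direction is fine as stated, since the identity $\bE_x[e^{-qT_0}]=r_q(-x)/r_q(0)$ is part of the hypothesis and at $x=0$ forces $\bE_0[e^{-qT_0}]=1$, hence $\bP_0(T_0=0)=1$.
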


We introduce the following conditions:
\begin{description}
\item[\textbf{(A3)}] The process $X$ is the type C, i.e.,
\[ \text{either} \quad a > 0 \quad \text{or} \quad \int_{|y|\leq1}|y|\nu(dy)=\infty, \]
\item[\textbf{(A4)}] The process $X$ is not a compound Poisson process.
\end{description}

The following was proved by Kesten \cite{Kes}, and another proof was given by Bretagnolle \cite{Bre}.
\begin{lemma}[\cite{Kes} and \cite{Bre}]\label{lem2}
The conditions \textbf{(A1)} and \textbf{(A3)} hold
if and only if the conditions \textbf{(A2)} and \textbf{(A4)}.
Furthermore, under the condition \textbf{(A1)}, the condition \textbf{(A2)} holds
if and only if the condition \textbf{(A3)} holds.
\end{lemma}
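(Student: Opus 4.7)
The plan is to prove the two equivalences by a combination of Fourier analysis (building on Lemma \ref{lem1}) and a path decomposition for finite-variation L\'evy processes. The natural logical structure is to first establish the second assertion---that under (A1) the conditions (A2) and (A3) are equivalent---and then deduce the first biconditional from it, using only the elementary fact that a compound Poisson process has finite variation and hence fails (A3), while any L\'evy process that is neither compound Poisson nor of type C must decompose as drift plus a finite-variation pure-jump part.

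For the direction (A1)$+$(A3) $\Rightarrow$ (A2), I would exploit the explicit representation in Lemma \ref{lem1}(ii),
\[ 2r_q(0)-\{r_q(x)+r_q(-x)\}=\frac{2}{\pi}\int^\infty_0 \Re\left(\frac{1-\cos (ux)}{q-\eta(u)}\right)du. \]
Under (A1) the integrand is dominated by $2\,\Re(1/(q-\eta(u)))$, which is integrable, so dominated convergence gives $r_q(0)=\lim_{x\to0}(r_q(x)+r_q(-x))/2$. To upgrade this to genuine continuity of $r_q$ at $0$ (and thence, by translation of the resolvent kernel, everywhere), one needs the antisymmetric part $r_q(x)-r_q(-x)$ also to vanish as $x\to0$; this is where (A3) enters, because type C processes produce sufficient decay of $1/|q-\eta(u)|$ at infinity (either via $-au^2/2$ when $a>0$, or via the divergence $\int_{|y|\le1}|y|\nu(dy)=\infty$) to control $\Im(1/(q-\eta(u)))$ integrated against $\sin(ux)$. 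Continuity of $r_q$ is then equivalent to (A2) by the identity $\bE_x[e^{-qT_0}]=r_q(-x)/r_q(0)$ in Lemma \ref{lem1}(i).

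Conversely, for (A1)$+$(A2) $\Rightarrow$ (A3), I would argue by contraposition: if (A3) fails, write $X_t = ct + J_t$ with $J$ a finite-variation pure-jump process, and use a ladder-height / subordinator-style decomposition of the paths to show $\bP_0(T_0>0)>0$, contradicting (A2). Once the equivalence under (A1) is established, the first assertion (A1)$+$(A3) $\Leftrightarrow$ (A2)$+$(A4) follows by combining it with the trivial implication (A3) $\Rightarrow$ (A4) and with the classical fact that a non-compound-Poisson L\'evy process regular for itself necessarily admits a bounded resolvent density, i.e.\ satisfies (A1).

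The main obstacle is the Fourier-analytic step controlling the antisymmetric part $r_q(x)-r_q(-x)$: the assumption (A1) only concerns $\Re(1/(q-\eta))$, so integrability of $\Im(1/(q-\eta))$ against $\sin(ux)$ uniformly in small $x$ does not come for free and must be extracted from the dichotomy in (A3). This is precisely the content of Kesten's and Bretagnolle's analysis, and I would cite their estimates rather than reproduce them; the remaining implications in the lemma are essentially bookkeeping around this core analytic fact.
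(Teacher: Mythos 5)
The paper does not prove this lemma at all: it is stated as a quotation of the Kesten--Bretagnolle theorem, with the sentence ``The following was proved by Kesten \cite{Kes}, and another proof was given by Bretagnolle \cite{Bre}'' serving as the entire justification. To the extent that you ultimately defer the analytic core to \cite{Kes} and \cite{Bre}, you are doing exactly what the paper does, and that is acceptable. The difficulty is that your sketch presents itself as a proof outline in which only one ``main obstacle'' remains to be imported from the literature, and that framing is not accurate: the direction \textbf{(A2)}$+$\textbf{(A4)} $\Rightarrow$ \textbf{(A1)}, which you dismiss as ``the classical fact that a non-compound-Poisson L\'evy process regular for itself necessarily admits a bounded resolvent density,'' is itself the deepest part of Kesten's theorem (regularity forces points to be non-polar, hence the finiteness of $\int_\bR \Re(1/(q-\eta(u)))\,du$). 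So two of the four implications, not one, rest entirely on the cited work, and calling the remainder ``bookkeeping'' overstates what your argument actually establishes.

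There is also a concrete circularity in the step \textbf{(A1)}$+$\textbf{(A3)} $\Rightarrow$ \textbf{(A2)}. You start from the identity
\[ 2r_q(0)-\{r_q(x)+r_q(-x)\}=\frac{2}{\pi}\int^\infty_0 \Re\left(\frac{1-\cos (ux)}{q-\eta(u)}\right)du, \]
but Lemma \ref{lem1}(ii) asserts this \emph{only under the hypothesis that $r_q$ is continuous}, which is precisely what you are trying to prove (continuity being equivalent to \textbf{(A2)} by Lemma \ref{lem1}(i)). Without continuity, \textbf{(A1)} only gives a bounded density defined up to Lebesgue-null sets, so the pointwise values $r_q(0)$, $r_q(x)$, $r_q(-x)$ appearing in that identity are not even well defined. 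Moreover, even granting the identity, continuity of the symmetrized density at $0$ together with vanishing of the antisymmetric part at $0$ yields at best continuity at the single point $0$ of one particular version of the density; passing from that to continuity of $r_q$ on all of $\bR$ requires a further argument (e.g.\ via $\bE_x[e^{-qT_0}]=r_q(-x)/r_q(0)$ and the strong Markov property) that the sketch does not supply. None of this makes the lemma false, but it means your proposal is not an independent proof; it is, like the paper's, a citation, and should be presented as such.
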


In order to construct the Tanaka formula via the techniques in the potential theory,
we use a connection between the local time and the resolvent density.
\begin{lemma}[{\cite[Lemma V.3]{Bert}}]\label{lem3}
Suppose that the conditions \textbf{(A1)} and \textbf{(A2)} hold.
For any $x \in \bR$, denote by $dL^x_t$ the Stieltjes measure of the increasing function $L^x_\cdot$.
Then, it holds that
\begin{equation*}
\bE_y\left[\int^\infty_0 e^{-qt} dL^x_t\right]=r_q(x-y), \quad q>0, y \in \bR.
\end{equation*}
\end{lemma}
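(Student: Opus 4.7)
The plan is to derive this identity by combining the occupation density definition of $L^x$ with the resolvent--density characterization in Lemma~\ref{lem1}(i).

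For any bounded Borel $f \geq 0$, I would first extend the occupation density formula from the integrand $f(X_s)$ to the time-weighted integrand $e^{-qs} f(X_s)$. Approximating $e^{-qs}$ by simple functions $\sum_i c_i 1_{[t_{i-1},t_i)}(s)$, applying the defining identity piecewise as
\[
\int_{t_{i-1}}^{t_i} f(X_s)\, ds = \int_\bR f(a)\,\{L^a_{t_i} - L^a_{t_{i-1}}\}\, da,
\]
and passing to the monotone limit yields
\[
\int^\infty_0 e^{-qs} f(X_s)\, ds = \int_\bR f(a) \left(\int^\infty_0 e^{-qs}\, dL^a_s\right) da \quad \text{a.s.}
\]
Taking $\bE_y$ on both sides, applying Fubini (justified by nonnegativity), and identifying $R_q f(y) = \int_\bR f(a)\, r_q(a-y)\, da$ via Lemma~\ref{lem1}(i), I obtain
\[
\int_\bR f(a)\, \bE_y\!\left[\int^\infty_0 e^{-qs}\, dL^a_s\right] da = \int_\bR f(a)\, r_q(a-y)\, da.
\]
Since $f$ is arbitrary, this forces $\bE_y[\int^\infty_0 e^{-qs}\, dL^x_s] = r_q(x-y)$ for Lebesgue-a.e.~$x \in \bR$.

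To promote the identity to every $x \in \bR$, I would exploit the specific $\limsup$-based definition. For the mollified functional
\[
L^{x,\ep}_t := \frac{1}{2\ep}\int^t_0 1_{\{|X_s-x|<\ep\}}\, ds,
\]
a direct calculation using Fubini and Lemma~\ref{lem1}(i) applied to $f = 1_{(x-\ep,x+\ep)}$ gives
\[
\bE_y\!\left[\int^\infty_0 e^{-qs}\, dL^{x,\ep}_s\right] = \frac{1}{2\ep}\int^{x+\ep}_{x-\ep} r_q(z-y)\, dz \longrightarrow r_q(x-y) \quad (\ep\downarrow 0),
\]
using the continuity of $r_q$ supplied by \textbf{(A2)}. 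Combining this with $L^x_t = \limsup_{\ep\downarrow 0} L^{x,\ep}_t$ and the a.e.~identity from the first step, I would conclude the equality pointwise for every $x$.

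The main obstacle is precisely this last step: exchanging $\limsup_{\ep\downarrow 0}$ with the expectation $\bE_y$ and the Stieltjes integration $\int_0^\infty e^{-qs}\, d(\cdot)_s$ is not automatic, because $\ep\mapsto L^{x,\ep}_\cdot$ is not monotone. My plan to handle this is to pass to the integrated form $q\int_0^\infty e^{-qs} L^{x,\ep}_s\, ds$ via integration by parts (justified since $L^{x,\ep}_T e^{-qT}\to 0$), exploit the uniform-in-$\ep$ bound $\bE_y[\int_0^\infty e^{-qs}\, dL^{x,\ep}_s] \leq \|r_q\|_\infty$ coming from boundedness of $r_q$, and then apply a reverse-Fatou argument combined with the a.e.~identity from Step~1 evaluated on a dense set of $x$ to pin down the correct $x$-continuous version for every $x \in \bR$.
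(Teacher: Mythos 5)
The paper does not actually prove this lemma; it is quoted verbatim from Bertoin \cite[Lemma V.3]{Bert}, so your attempt can only be judged against the standard argument there. Your first step is sound: extending the occupation-density identity to the integrand $e^{-qs}f(X_s)$ and matching against $R_qf(y)=\int f(a)r_q(a-y)\,da$ correctly yields $\bE_y[\int_0^\infty e^{-qs}\,dL^a_s]=r_q(a-y)$ for Lebesgue-a.e.\ $a$. The computation $\bE_y[\int_0^\infty e^{-qs}\,dL^{x,\ep}_s]=\frac{1}{2\ep}\int_{x-\ep}^{x+\ep}r_q(z-y)\,dz\to r_q(x-y)$ is also correct. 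The genuine gap is exactly where you locate it, and your proposed repair does not close it. Reverse Fatou needs a pointwise dominating function, and none is available: $L^{x,\ep}_t\le t/(2\ep)$ blows up as $\ep\downarrow 0$, and a uniform bound on the \emph{expectations} $\bE_y[\int e^{-qs}dL^{x,\ep}_s]\le \|r_q\|_\infty$ is not domination of the integrands. Ordinary Fatou applies but only to $\liminf$, which is the wrong side of the $\limsup$ defining $L^x_t$. Finally, "the a.e.\ identity on a dense set of $x$" cannot pin down the value at a fixed $x$ unless you already know $x\mapsto\bE_y[\int_0^\infty e^{-qt}dL^x_t]$ is continuous --- but that is essentially the statement to be proved, so the argument is circular. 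A symptom of the problem is that your proof uses \textbf{(A2)} only through continuity of $r_q$; the lemma is false without regularity of points, so \textbf{(A2)} must enter more substantially.

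The missing idea (and the route taken in \cite{Bert}) is to use the strong Markov property at the first hitting time $T_x$ together with the support property of the measure $dL^x$ (it is carried by the closure of $\{t:X_t=x\}$, so $L^x$ does not increase before $T_x$). These give
\begin{equation*}
\bE_y\left[\int_0^\infty e^{-qt}\,dL^x_t\right]
=\bE_y\left[e^{-qT_x}\right]\,\bE_x\left[\int_0^\infty e^{-qt}\,dL^x_t\right]
=\frac{r_q(x-y)}{r_q(0)}\,\bE_0\left[\int_0^\infty e^{-qt}\,dL^0_t\right],
\end{equation*}
where the hitting-time formula of Lemma \ref{lem1}(i) and spatial homogeneity were used. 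The whole problem is thus reduced to the single constant $C:=\bE_0[\int_0^\infty e^{-qt}\,dL^0_t]$, and \emph{now} your a.e.\ identity from Step 1 finishes the job: for a.e.\ $x$ one has $r_q(x-y)=\frac{r_q(x-y)}{r_q(0)}\,C$, and choosing one such $x$ with $r_q(x-y)\neq 0$ forces $C=r_q(0)$. Without this reduction, the interchange of $\limsup_{\ep\downarrow 0}$ with $\bE_y$ remains unjustified and the proof is incomplete.
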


\begin{remark}
\rm{
In \cite[Theorem V.1]{Bert}, the condition \textbf{(A1)} holds if and only if
the occupation measure $\mu_t$ satisfying
for each non-negative Borel measurable function $f$ and $t \geq 0$,
\begin{equation*}
\int^t_0 f(X_s) ds = \int_\bR f(x) \mu_t(dx),
\end{equation*}
has the density in $L^2(dx\otimes d\bP_0)$ as the Radon--Nikodym derivative.
Therefore, if the condition \textbf{(A1)} holds, the local time for the process $X$ exists.
If the condition \textbf{(A1)} holds, then under the condition \textbf{(A2)} the local time $L^x_t$ is continuous almost surely with respect to $t$.
In the symmetric case,
if the condition \textbf{(A1)} holds, then the condition \textbf{(A2)} holds.
}
\end{remark}

\begin{remark}
\rm{
By Blumenthal and Getoor \cite{Blu},
it can be considered as the potential theoretic definition of the local time,
i.e. the local time can be defined as
a positive additive functional $L_t^x$ such that
\begin{equation*}
\bE_0\left[\int^\infty_0 e^{-qt} dL^x_t\right]=r_q(x).
\end{equation*}
}
\end{remark}

\section{Renormalized zero resolvent}\label{sec3}
Using the Fourier transform for $L^2(\bR)$-functions,
the resolvent density can be represented as follows:

\begin{proposition}\label{prop2}
Suppose that the conditions \textbf{(A1)} and \textbf{(A2)} hold.
The bounded continuous resolvent density can be expressed as:
\begin{equation*}
r_q(x)=\cF^{-1}\left[ \frac{1}{q-\eta(u)} \right](-x)
\end{equation*}
for all $q>0$ and $x \in \bR$.
\end{proposition}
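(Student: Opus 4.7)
The plan is to compute $\cF[r_q]$ directly from Proposition \ref{prop1} via Parseval's identity, and then apply $L^2$ Fourier inversion. The key preliminary observation is that $r_q \in L^1(\bR) \cap L^\infty(\bR)$: boundedness is Lemma \ref{lem1}(i), while integrability follows from
\[ \int_\bR r_q(y)\,dy = R_q 1(0) = \bE_0\!\left[\int_0^\infty e^{-qt}\,dt\right] = \frac{1}{q}. \]
In particular $r_q \in L^1 \cap L^2$. Similarly, the bound $\Re(1/(q-\eta(u))) \ge q/|q-\eta(u)|^2$ (using $\Re\eta \leq 0$) combined with condition \textbf{(A1)} gives $1/(q-\eta(\cdot)) \in L^2(\bR)$, so $\cF^{-1}[1/(q-\eta)]$ is well-defined as an $L^2$ function.

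The argument has three steps. First, fix $f \in \cS(\bR)$ and evaluate Proposition \ref{prop1} at $x=0$; since $\cF[f]\in L^1$ and $1/(q-\eta) \in L^\infty$, the integral is absolutely convergent and
\[ R_qf(0) = \frac{1}{2\pi} \int_\bR \frac{\cF[f](u)}{q-\eta(u)}\,du. \]
Second, by Lemma \ref{lem1}(i), $R_q f(0) = \int f(y) r_q(y)\,dy$; since $f, r_q \in L^1\cap L^2$ with $r_q$ real, Plancherel rewrites this as $(2\pi)^{-1}\int \cF[f](u)\cF[r_q](-u)\,du$. Third, equating the two expressions and letting $f$ range over $\cS(\bR)$, the density of $\cF[\cS(\bR)] = \cS(\bR)$ in $L^2$ forces $\cF[r_q](-u) = 1/(q-\eta(u))$ almost everywhere. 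A change of variable $u\mapsto -u$ followed by $L^2$ Fourier inversion then gives $r_q(x) = \cF^{-1}[1/(q-\eta(u))](-x)$, and continuity of $r_q$ (Lemma \ref{lem1}(i)) picks out the canonical representative, so the identity holds for every $x \in \bR$.

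The main subtlety is interpretive rather than technical. Condition \textbf{(A1)} yields $1/(q-\eta) \in L^2$ but not $L^1$, so the right hand side of the statement cannot in general be read as an ordinary absolutely convergent Lebesgue integral $(2\pi)^{-1}\int e^{-iux}/(q-\eta(u))\,du$; one must consistently use the $L^2$-extension of $\cF^{-1}$, while keeping Schwartz-class test functions on the pairing side so that every intermediate integral converges absolutely. Aside from this bookkeeping, the proof reduces to standard Fourier analysis.
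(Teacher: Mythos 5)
Your proof is correct and follows essentially the same route as the paper: both identify $r_q$ by pairing $R_q f$ against Schwartz test functions via Proposition \ref{prop1} and Parseval's theorem, and then use continuity of $r_q$ to upgrade the a.e.\ identity to a pointwise one. The only cosmetic difference is that you determine $\cF[r_q]$ first and then invert in $L^2$, while the paper moves the Fourier transform onto the test function and identifies $r_q$ directly; your explicit verification that $\int_\bR r_q = 1/q$ is a small bonus the paper leaves implicit.
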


\begin{proof}
Since $\Re(q-\eta(u)) \geq q$,
we have
\begin{align*}
\left|\frac{1}{q-\eta(u)}\right|^2&\leq \frac{\Re(q-\eta(u))}{q|q-\eta(u)|^2}
=\frac{1}{q}\Re\left(\frac{1}{q-\eta(u)}\right).
\end{align*}
Thus, by the condition \textbf{(A1)} we have $1\slash (q-\eta(u)) \in L^2(\bR)$.
By Proposition \ref{prop1} and Parseval's theorem,
we have for all $\phi \in \cS(\bR)$,
\begin{align*}
R_q\phi(x) &= \cF^{-1}\left[ \frac{1}{q-\eta(u)}\cF[\phi](u)\right](x)
\\&=\frac{1}{2\pi}\int_\bR \frac{e^{iux}}{q-\eta(u)}\cF[\phi](u)du
\\&=\frac{1}{2\pi}\int_\bR\cF\left[ \frac{e^{iux}}{q-\eta(u)} \right](y)\phi(y)dy
\\&=\int_\bR \cF^{-1}\left[ \frac{1}{q-\eta(u)}\right](x-y)
\phi(y)dy.
\end{align*}
From the definition of the resolvent operator $R_q$,
we then have for all $\phi \in \cS(\bR)$,
\begin{align*}
\int_\bR \left(r_q(y)-\cF^{-1}\left[ \frac{1}{q-\eta(u)} \right](-y)\right)\phi(y)dy=0.
\end{align*}
Since $r_q$ is continuous and integrable, by Lemma \ref{lem1}(i),
we have
\begin{equation*}
r_q(x)=\cF^{-1}\left[ \frac{1}{q-\eta(u)} \right](-x)
\end{equation*}
for all $q>0$ and $x \in \bR$.
\end{proof}

We introduce the following condition:
\begin{description}
\item[\textbf{(A)}] The L\'evy symbol $\eta$ satisfies that
\[ \frac{1}{q-\eta(u)} \in L^1(\bR), \quad \text{for all $q >0$}. \]
\end{description}

\begin{corollary}\label{cor1}
Suppose that the condition \textbf{(A)} holds.
The bounded continuous resolvent density $r_q$ can be expressed as:
\begin{equation*}
r_q(x)=\frac{1}{\pi}\int^\infty_0 \Re\left( \frac{e^{-iux}}{q-\eta(u)} \right) du
\end{equation*}
for all $q>0$ and $x \in \bR$.
\end{corollary}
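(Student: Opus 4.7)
The plan is to deduce the formula from Proposition \ref{prop2} by splitting the real line at the origin and exploiting the parity of $\Re\eta$ and $\Im\eta$.

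First I would verify that condition \textbf{(A)} implies both \textbf{(A1)} and \textbf{(A2)}, so that Proposition \ref{prop2} is actually applicable. The implication $\textbf{(A)}\Rightarrow\textbf{(A1)}$ is immediate since $\Re(1/(q-\eta(u)))\le |1/(q-\eta(u))|$. For $\textbf{(A2)}$, one notes that a compound Poisson process has a bounded L\'evy symbol $\eta$, hence $|q-\eta(u)|$ is bounded, so $1/(q-\eta(u))$ is bounded below by a positive constant and cannot lie in $L^1(\bR)$; this rules out \textbf{(A4)} being violated, and Lemma \ref{lem2} then delivers \textbf{(A2)}.

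Next I would apply Proposition \ref{prop2} to write
\[
r_q(x)=\cF^{-1}\!\left[\frac{1}{q-\eta(u)}\right]\!(-x)=\frac{1}{2\pi}\int_\bR \frac{e^{-iux}}{q-\eta(u)}\,du,
\]
where the integral is a genuine (absolutely convergent) Lebesgue integral thanks to \textbf{(A)}. Then I would split the domain into $(-\infty,0)$ and $(0,\infty)$ and change variables $u\mapsto -u$ in the first piece. Using the fact that $\Re\eta$ is even and $\Im\eta$ is odd one obtains $\eta(-u)=\overline{\eta(u)}$, hence
\[
\frac{e^{iux}}{q-\eta(-u)}=\overline{\left(\frac{e^{-iux}}{q-\eta(u)}\right)},
\]
so the two half-line integrals are complex conjugates of each other. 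Adding them yields twice the real part, and the factor $2$ cancels the $2\pi$ in the denominator to give the claimed formula.

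I do not expect a real obstacle here: the only subtlety is checking that condition \textbf{(A)} is strong enough to invoke Proposition \ref{prop2}, which is handled by the compound Poisson exclusion above. The rest is a one-line symmetry argument together with an application of Fubini (or just splitting a Lebesgue integral), both of which are legitimate because $1/(q-\eta(u))\in L^1(\bR)$.
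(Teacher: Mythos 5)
Your derivation of the displayed formula itself is correct and is exactly what the paper intends: Corollary \ref{cor1} is stated without a separate proof, being read off from Proposition \ref{prop2} by folding the inverse Fourier integral onto $(0,\infty)$ using $\eta(-u)=\overline{\eta(u)}$ (equivalently, $\Re\eta$ even and $\Im\eta$ odd), the integral being absolutely convergent under \textbf{(A)}.

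The genuine gap is in your justification that \textbf{(A)} implies \textbf{(A2)}. Excluding the compound Poisson case only gives \textbf{(A4)}, and Lemma \ref{lem2} does not let you pass from \textbf{(A1)}$\wedge$\textbf{(A4)} to \textbf{(A2)}: the lemma asserts \textbf{(A1)}$\wedge$\textbf{(A3)} $\Leftrightarrow$ \textbf{(A2)}$\wedge$\textbf{(A4)}, and, under \textbf{(A1)}, that \textbf{(A2)}$\Leftrightarrow$\textbf{(A3)}. If \textbf{(A2)} fails, then \textbf{(A3)} fails too, both sides of the biconditional are false, and \textbf{(A1)}$\wedge$\textbf{(A4)} remains perfectly consistent with the lemma; indeed there are non--compound-Poisson processes (e.g.\ of bounded variation with nonzero drift) satisfying \textbf{(A1)} for which $0$ is irregular for itself, so the implication you invoke is not merely unproved but false in general. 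What you need is either \textbf{(A3)} or, better, the route the paper takes in Corollary \ref{cor2}: under \textbf{(A)} the function $\cF^{-1}\left[1/(q-\eta(u))\right](-\cdot)$ is bounded and continuous (inverse Fourier transform of an $L^{1}$ function), the Parseval computation in the proof of Proposition \ref{prop2} — which only needs \textbf{(A1)} to produce an a.e.\ defined bounded density — identifies it as a version of the resolvent density, and Lemma \ref{lem1}(i) then yields \textbf{(A2)}. With that substitution the rest of your argument goes through.
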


From Lemma \ref{lem1}(i), We have the following:
\begin{corollary}\label{cor2}
If the condition \textbf{(A)} holds, then the conditions \textbf{(A1)} and \textbf{(A2)} hold.
\end{corollary}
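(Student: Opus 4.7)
The plan is to dispatch the two conclusions in order. First I would note that $(A) \Rightarrow (A1)$ is a one-line observation: since $\Re(q-\eta(u)) = q - \Re\eta(u) \ge q > 0$, one has $\Re\bigl(1/(q-\eta(u))\bigr) = \Re(q-\eta(u))/|q-\eta(u)|^2 \ge 0$, and it is pointwise bounded by $|1/(q-\eta(u))|$, which (A) places in $L^1(\bR)$. Hence (A1) follows immediately.

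Second, to obtain (A2) I would produce a continuous version of the resolvent density and appeal to Lemma \ref{lem1}(i). By (A1) and Bertoin's Theorem II.16 (cited in the remark after Lemma \ref{lem1}), the kernel $R_q(0,dy)$ admits a bounded density $r_q$, and translation invariance of $X$ forces $R_q(x,dy) = r_q(y-x)\,dy$. I would then rerun the Parseval computation from the proof of Proposition \ref{prop2}: the bound $|1/(q-\eta(u))|^2 \le (1/q)\Re\bigl(1/(q-\eta(u))\bigr)$ combined with (A) places $1/(q-\eta)$ in $L^2(\bR)$, so for every $\phi \in \cS(\bR)$,
\[
\int_\bR r_q(y-x)\phi(y)\,dy = \int_\bR \cF^{-1}\bigl[1/(q-\eta)\bigr](x-y)\,\phi(y)\,dy,
\]
forcing $r_q(x) = \cF^{-1}\bigl[1/(q-\eta)\bigr](-x)$ almost everywhere. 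Under (A), $\cF^{-1}[1/(q-\eta)]$ is bounded and continuous by the Riemann--Lebesgue lemma, so $r_q$ coincides a.e.\ with a bounded continuous function. Choosing this version as $r_q$, Lemma \ref{lem1}(i) then delivers (A2).

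The main (mild) subtlety is avoiding circularity: Proposition \ref{prop2} as stated invokes (A2) as a hypothesis to produce the continuous density, whereas (A2) is precisely what I am trying to establish. The remedy is to observe that the Parseval step inside the proof of Proposition \ref{prop2} uses only (A1) together with the $L^2$ membership of $1/(q-\eta)$, both of which are already in hand from (A) alone. Once the almost-everywhere identity with a continuous function is established, continuity of a version of $r_q$ is automatic, and the equivalence in Lemma \ref{lem1}(i) closes the argument.
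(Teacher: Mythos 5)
Your proposal is correct and follows essentially the route the paper intends: the paper offers no written proof beyond the pointer ``From Lemma \ref{lem1}(i)'', and the implicit argument is exactly yours --- $\Re\bigl(1/(q-\eta)\bigr)$ is nonnegative and dominated by $|1/(q-\eta)|$, giving \textbf{(A1)}, while the $L^1$ hypothesis makes $\cF^{-1}[1/(q-\eta)]$ a bounded continuous version of the density, so Lemma \ref{lem1}(i) yields \textbf{(A2)}. Your extra care in rerunning the Parseval step of Proposition \ref{prop2} using only \textbf{(A1)} and the $L^2$ bound, rather than quoting Proposition \ref{prop2} or Corollary \ref{cor1} verbatim, correctly removes the circularity that a literal reading of those statements (which presuppose \textbf{(A2)}) would introduce.
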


\begin{remark}
\rm{
From Lemma \ref{lem2}, we know that if the condition \textbf{(A)} holds,
then the conditions \textbf{(A1)}, \textbf{(A2)}, \textbf{(A3)} and \textbf{(A4)} hold.
}
\end{remark}

\begin{remark}
\rm{
An asymmetric Cauchy process ($\al =1, \be\neq0$) does not satisfy the condition \textbf{(A)} but satisfy the conditions \textbf{(A1)} and \textbf{(A2)}.
}
\end{remark}

Now, we set
\begin{equation*}
h_q(x):=r_q(0)-r_q(-x), \quad q>0, x \in \bR.
\end{equation*}
From Lemma \ref{lem1}(i),
since $0\leq r_q(y)\leq r_q(0)$ for all $y \in \bR$,
then we have $h_q \geq 0$.
In \cite{Yan2},
the limit $h := \lim_{q \downarrow 0} h_q$ is called the renormalized zero resolvent if the limit exists,
which is known as a harmonic function for the killed process under some conditions.

But its convergence of $h_q$ is not clear for the asymmetric case, and Yano \cite{Yan2} needed the following conditions:
\begin{description}
\item[\textbf{(L1)}] The L\'evy symbol $\eta$ satisfies that
\[ \int^\infty_0 \frac{1}{q-\Re\eta(u)}du < \infty, \quad \text{for all $q >0$}, \]
\item[\textbf{(L2)}] The process $X$ is the type C, that is the same condition as \textbf{(A3)},
\item[\textbf{(L3)}] The real and imaginary parts of the L\'evy symbol $\eta$ have measurable derivatives on $(0,\infty)$ which satisfy
\[ \int^\infty_0 (u^2\wedge 1)\frac{|\Re\eta(u)'|+|\Im\eta(u)'|}{\Re\eta(u)^2+\Im\eta(u)^2}du < \infty. \]
\end{description}

However, we suppose the condition \textbf{(A)}, which is weaker than the condition \textbf{(L1)}.
The condition \textbf{(L2)} holds under the condition \textbf{(A)}.
Moreover, we shall introduce the condition \textbf{(B)} which is weaker than the condition \textbf{(L3)}:
\begin{description}
\item[\textbf{(B)}] The L\'evy symbol $\eta$ satisfies that
\[ \int^1_0\left|\Im\left(\frac{u}{\eta(u)}\right)\right| du <\infty. \]
\end{description}

\begin{theorem}\label{thm1}
Suppose that the condition \textbf{(A)} and \textbf{(B)} hold.
For all $x\in\bR$,
\begin{align*}
\lim_{q\downarrow 0} h_q(x)
=\frac{1}{\pi}\int^\infty_0 \Re\left(\frac{e^{iux}-1}{\eta(u)}\right)du =:h(x).
\end{align*}
\end{theorem}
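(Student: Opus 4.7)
The plan is to realize $h_q(x) = r_q(0) - r_q(-x)$ as a single Fourier-type integral via Corollary~\ref{cor1} and then exchange the limit $q\downarrow 0$ with the integral by dominated convergence, splitting the integrand into symmetric and antisymmetric pieces so that condition~\textbf{(B)} controls the antisymmetric part near $u=0$ while~\textbf{(A)} and the type~C property (Lemma~\ref{lem2}) control the tail.

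By Corollary~\ref{cor1},
\[
h_q(x) = \frac{1}{\pi}\int_0^\infty \Re\left(\frac{1-e^{iux}}{q-\eta(u)}\right) du,
\]
and writing $1-e^{iux}=(1-\cos(ux))-i\sin(ux)$ decomposes the integrand as
\[
(1-\cos(ux))\,\Re\frac{1}{q-\eta(u)} + \sin(ux)\,\Im\frac{1}{q-\eta(u)}.
\]
For $u\neq 0$ this converges pointwise, as $q\downarrow 0$, to $\Re((e^{iux}-1)/\eta(u))$. The key uniform-in-$q$ estimate is the elementary identity
\[
|q-\eta(u)|^2 = (q+|\Re\eta(u)|)^2 + \Im\eta(u)^2 \geq |\eta(u)|^2,
\]
valid because $\Re\eta\leq 0$, yielding the $q$-free bounds $|\Re(1/(q-\eta(u)))|\leq 1/|\eta(u)|$ and $|\Im(1/(q-\eta(u)))|=|\Im\eta(u)|/|q-\eta(u)|^2\leq |\Im\eta(u)|/|\eta(u)|^2$.

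I would now produce integrable dominants on $(0,1)$ and on $(1,\infty)$ separately. On $(0,1)$, using $|\sin(ux)|\leq |ux|$, the antisymmetric integrand is bounded by $|x|\cdot|u\,\Im\eta(u)|/|\eta(u)|^2 = |x|\,|\Im(u/\eta(u))|$, integrable by~\textbf{(B)}; the symmetric integrand $(1-\cos(ux))/|\eta(u)|\leq u^2x^2/(2|\eta(u)|)$ is controlled via the L\'evy--Khintchine asymptotics of $\eta$ near zero. On $(1,\infty)$, both summands are at most $C/|\eta(u)|$; since~\textbf{(A)} with Lemma~\ref{lem2} makes the process type~C, $|\eta(u)|\to\infty$ as $|u|\to\infty$, so for $|u|$ large the bound $1/|\eta(u)|\leq 2/|q_0-\eta(u)|$ combined with the integrability of $1/|q_0-\eta(u)|$ from~\textbf{(A)} gives the desired integrability. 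Dominated convergence then interchanges limit and integral, producing the claimed formula.

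The main obstacle is the symmetric-part dominant near $u=0$: condition~\textbf{(B)} is tailor-made for the antisymmetric term, but for the symmetric term one must invoke the expansion $\eta(u) = ibu - \tfrac{1}{2}au^2 + \int(e^{iuy}-1-iuy\mathbf{1}_{|y|\leq 1})\nu(dy)$ and argue by cases (nonzero drift $b$ yields $|\eta(u)|\gtrsim |u|$, while nonzero $a$ or a sufficiently active small-jump component yields $|\eta(u)|\gtrsim u^2$) to control $u^2/|\eta(u)|$ on a neighborhood of zero; thereafter the conclusion is routine.
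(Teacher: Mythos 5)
Your proposal is correct and follows essentially the same route as the paper: the same decomposition of $\Re\bigl((1-e^{iux})/(q-\eta(u))\bigr)$ into the $1-\cos$ and $\sin$ pieces, the same $q$-uniform bound $|q-\eta(u)|\geq|\eta(u)|$, condition \textbf{(B)} for the antisymmetric part near $0$, integrability of $1/|\eta|$ at infinity, and dominated convergence. The ``main obstacle'' you flag for the symmetric part near $u=0$ is exactly the paper's Lemma \ref{lem4}(iii), which disposes of it without any case analysis: condition \textbf{(A)} forces type C, whence $-\Re\eta(u)\geq\bigl(a/2+\tfrac14\int_{|y|\leq1}y^2\nu(dy)\bigr)u^2>0$ for $0<u\leq1$, so $u^2/|\eta(u)|$ is bounded there.
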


To show Theorem \ref{thm1} and establish the Tanaka formula,
we need the following lemma.
\begin{lemma}\label{lem4}
Suppose that the condition \textbf{(A)} holds.
Then, the followings hold:
\begin{flalign*}
(i)\quad&|\eta(u)| \to \infty \quad \text{as $|u| \to \infty$}.&
\\(ii)\quad&\int^\infty_c \left| \frac{1}{\eta(u)}\right| du < \infty 
\quad \text{for all $c>0$}.&
\\(iii)\quad&\int^c_0 \left| \frac{u^2}{\eta(u)}\right|du < \infty
\quad \text{for all $c>0$}.&
\\(iv)\quad&\lim_{q\downarrow 0}\int_\bR\left|\frac{q}{q-\eta(u)}\right|du = 0.&
\end{flalign*}
\end{lemma}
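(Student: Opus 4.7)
The plan is to handle the four parts in order, using condition \textbf{(A)} and Fourier analysis as the main tools. I would use throughout the preliminary observation that under \textbf{(A)} each $X_t$ is absolutely continuous (standard consequence of \textbf{(A1)} via the Sato-type argument that absolute continuity of the resolvent propagates to all $P_t$), and hence $\eta(u)\neq 0$ for $u\neq 0$, since $\eta(u_0)=0$ would force $\phi_t(u_0)=e^{t\eta(u_0)}=1$ and thereby place $X_t$ on the lattice $(2\pi/u_0)\bZ$.

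For (i), I would apply Riemann--Lebesgue. Since $\int r_q(y)\,dy = R_q\mathbf{1}(0) = 1/q$, the density $r_q$ lies in $L^1(\bR)$, and combining Proposition~\ref{prop2} with Fourier inversion yields $\cF[r_q](u) = 1/(q-\eta(-u))$. Riemann--Lebesgue forces this to tend to $0$ as $|u|\to\infty$, and the symmetry $|\eta(-u)|=|\eta(u)|$ then gives (i). For (ii), using (i) I pick $c'\geq c$ with $|\eta(u)|\geq 2$ for $|u|\geq c'$; then $|1-\eta(u)|\leq 1+|\eta(u)|\leq 2|\eta(u)|$ gives $|1/\eta(u)|\leq 2|1/(1-\eta(u))|$, integrable on the tail by \textbf{(A)} with $q=1$, while on the compact interval $[c,c']$ the preliminary remark makes $|1/\eta|$ continuous and bounded.

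For (iii), I would split along the type-C dichotomy (Lemma~\ref{lem2}). If $a>0$ then $|\eta(u)|\geq|\Re\eta(u)|\geq au^2/2$ gives $u^2/|\eta(u)|\leq 2/a$, and the claim is immediate. If $a=0$, so $\int_{|y|\leq 1}|y|\,\nu(dy)=\infty$, the elementary bound $1-\cos v\geq v^2/3$ on $|v|\leq 1$ yields, for $|u|\leq 1$,
\[
|\Re\eta(u)|\geq \int_{|y|\leq 1/|u|}(1-\cos uy)\,\nu(dy)\geq \frac{u^2}{3}\,g(1/|u|),\qquad g(r):=\int_{|y|\leq r}y^2\,\nu(dy).
\]
The hypothesis $\int_{|y|\leq 1}|y|\,\nu(dy)=\infty$ forces $\nu$ to charge every neighborhood of $0$, so $g(r)>0$ for all $r>0$; monotonicity of $g$ then bounds $u^2/|\eta(u)|\leq 3/g(1)$ uniformly on $(0,1]$, and continuity handles $[1,c]$.

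For (iv), I would split $\bR=\{|u|\leq c_0\}\cup\{|u|>c_0\}$ with $c_0$ chosen via (i) so that $|\eta(u)|\geq 1$ on the tail. On the bounded piece $|q/(q-\eta)|\leq 1$ tends to $0$ pointwise for $u\neq 0$, and dominated convergence on a bounded interval gives $\int_{-c_0}^{c_0}|q/(q-\eta(u))|\,du\to 0$. On the tail, for $q\leq 1/2$ we have $|q-\eta(u)|\geq|\eta(u)|/2$, hence $|q/(q-\eta(u))|\leq 2q/|\eta(u)|$, and (ii) gives $\int_{|u|>c_0}|q/(q-\eta)|\,du\leq 2q\int_{|u|>c_0}|1/\eta|\,du\to 0$. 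The main obstacle I anticipate is part (iii) in the case $a=0$: turning the type-C hypothesis into a concrete lower bound on $|\eta|$ near the origin; the rest is essentially Fourier analysis plus careful bookkeeping of scales.
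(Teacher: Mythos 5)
Your proposal is correct and follows essentially the same route as the paper: Riemann--Lebesgue applied to the integrable resolvent density for (i), comparison of $1/\eta$ with $1/(1-\eta)$ on the tail for (ii), the lower bound $-\Re\eta(u)\gtrsim u^2\int_{|y|\le1}y^2\nu(dy)$ coming from the type-C property for (iii), and domination plus dominated convergence for (iv). The only cosmetic differences are that the paper obtains the nonvanishing of $\eta$ off the origin from condition \textbf{(A3)} (via $\Re\eta(u)\neq0$ for $u\neq0$) rather than from absolute continuity of the marginals, and in (iv) it uses the single dominating function $1\wedge|1/\eta(u)|$ in place of your two-region split.
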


\begin{proof}
(i) Since $r_1 \in L^1(\bR)$, $\cF[r_1](u)=1\slash(1-\eta(-u))$ and
\begin{equation*}
\left| \frac{1}{1-\eta(-u)} \right| \geq \frac{1}{1+|\eta(-u)|},
\end{equation*}
then, by the Riemann--Lebesgue theorem,
we have $|\eta(u)| \to \infty$ as $|u| \to \infty$. 

(ii) By Corollary \ref{cor2} and Lemma \ref{lem2}, the condition \textbf{(A3)} holds.
We then know $\Re\eta(u) \neq 0$ for $u \neq 0$.
By the condition \textbf{(A)}, we have
\begin{equation*}
\int_\bR \left|\frac{1}{1-\eta(u)}\right|du < \infty.
\end{equation*}
By the assertion (i), we know $|\eta(u)\slash(1-\eta(u))| \to 1$ as $|u| \to \infty$.
Hence, the required result follows.

(iii) Since we know $1-\cos(x) \geq x^2 \slash 4$ for $|x| \leq1$, by the condition \textbf{(A3)},
we have for all $0 < u \leq 1$
\begin{align*}
\left| \frac{\eta(u)}{u^2}\right| &\geq -\frac{\Re\eta(u)}{u^2}&
\\ &\geq \frac{a}{2}+\int_{|y|\leq |u|^{-1}}\frac{1-\cos(uy)}{(uy)^2}y^2\nu(dy)&
\\ &\geq \frac{a}{2}+\frac{1}{4}\int_{|y|\leq |u|^{-1}}y^2\nu(dy)&
\\ &\geq \frac{a}{2}+\frac{1}{4}\int_{|y|\leq1}y^2\nu(dy)>0.&
\end{align*}
Hence, the required result follows from the dominated convergence theorem.

(iv) For each $q<1$, we have $|q\slash(q-\eta(u))| \leq 1 \wedge |1\slash\eta(u)|$.
Thus, by the assertion (ii) and the dominated convergence theorem, we have
\begin{align*}
\lim_{q\downarrow 0}\int_\bR\left|\frac{q}{q-\eta(u)}\right|du 
&=\int_\bR \lim_{q\downarrow 0} \left|\frac{q}{q-\eta(u)}\right| du
\\&= 0. \qedhere
\end{align*}
\end{proof}

Now, we shall prove Theorem \ref{thm1}.
\begin{proof}[Proof of Theorem \ref{thm1}]
By Corollary \ref{cor1}, we have for each $x \in \bR$,
\begin{align*}
h_q(x)&=\frac{1}{\pi}\int^\infty_0 \Re\left(\frac{1-e^{iux}}{q-\eta(u)}\right)du
\\&=\frac{1}{\pi}\int^\infty_0 \Re\left(\frac{1-\cos(ux)}{q-\eta(u)}\right)du
+\frac{1}{\pi}\int^\infty_0 \Im\left(\frac{\sin(ux)}{q-\eta(u)}\right)du.
\end{align*}
Since we have for all $u\in\bR$,
\[ \left|\Re\left(\frac{1-\cos(u)}{q-\eta(u)}\right)\right|
\leq \frac{u^2 \wedge 1}{|\eta(u)|}, \]
by Lemma \ref{lem4}(ii), (iii) and using the dominated convergence theorem,
we have
\begin{equation*}
\int^\infty_0 \Re\left(\frac{1-\cos(u)}{q-\eta(u)}\right)du
\to \int^\infty_0 \Re\left(\frac{\cos(u)-1}{\eta(u)}\right)du,
\end{equation*}
as $q \downarrow 0$. 
Since we have
\begin{align*}
\left|\Im\left(\frac{\sin(u)}{q-\eta(u)}\right)\right|
\leq \left|\Im\left(\frac{u\wedge1}{\eta(u)}\right)\right|
\leq \left|\Im\left(\frac{u}{\eta(u)}\right)\right| \wedge \left|\frac{1}{\eta(u)}\right|,
\end{align*}
by the condition \textbf{(B)}, Lemma \ref{lem4}(ii) and using the dominated convergence theorem,
we have
\begin{equation*}
\int^\infty_0 \Im\left(\frac{\sin(ux)}{q-\eta(u)}\right)du
\to -\int^\infty_0 \Im\left(\frac{\sin(ux)}{\eta(u)}\right)du,
\end{equation*}
as $q \downarrow 0$. 
\end{proof}

\section{Tanaka formula}\label{sec4}
Using  Lemma \ref{lem3},
we can construct the Doob--Meyer decomposition as stated in \cite[Proposition 1]{Sal}.
\begin{proposition}\label{prop3}
Suppose that the conditions \textbf{(A1)} and \textbf{(A2)} hold.
For each $q>0$, $t\geqslant0$ and $x \in \bR$, it holds that
\begin{equation*}
r_q(-X_t+x)=r_q(-X_0+x)+M^{q,x}_t+q\int^t_0r_q(-X_s+x)ds-L^x_t,
\end{equation*}
where $M^{q,x}_t$ is a martingale with respect to the natural filtration $\{\cG_t\}_{t\geq0}$ of $X$.
\end{proposition}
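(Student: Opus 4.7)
Set $f(y) := r_q(-y+x)$, so the proposition is equivalent to showing that $M^{q,x}_t := f(X_t) - f(X_0) - q\int_0^t f(X_s)\,ds + L^x_t$ is a $(\cG_t)$-martingale. The plan is to reuse Lemma \ref{lem3}, which gives the probabilistic representation $f(y) = \bE_y[\int_0^\infty e^{-qu}\,dL^x_u]$, and to exploit it through the auxiliary process
\[
N_t := e^{-qt} f(X_t) + \int_0^t e^{-qs}\,dL^x_s.
\]
Informally $N_t = \bE[\int_0^\infty e^{-qs}\,dL^x_s \mid \cG_t]$, hence a uniformly integrable $(\cG_t)$-martingale. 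I would verify this directly for $s \leq t$ by combining the simple Markov property with the shift identity $dL^x_u \circ \theta_s = dL^x_{u+s}$, valid because $L^x$ is an additive functional of $X$. This yields
\[
\bE\!\left[\int_s^t e^{-qu}\,dL^x_u \,\Big|\, \cG_s\right] = e^{-qs}\bE_{X_s}\!\left[\int_0^{t-s} e^{-qu}\,dL^x_u\right] = e^{-qs} f(X_s) - e^{-qt}\bE_{X_s}[f(X_{t-s})],
\]
the second equality coming from splitting the integral defining $f(X_s)$ at time $t-s$. Together with $\bE[e^{-qt} f(X_t)\mid\cG_s] = e^{-qt}\bE_{X_s}[f(X_{t-s})]$, this gives $\bE[N_t - N_s \mid \cG_s] = 0$.

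Once $N$ is a martingale, I would recover $M^{q,x}_t$ by integrating the continuous deterministic weight $e^{qs}$ against $N$. Applying the semimartingale product rule to $e^{qs}N_s$ (with vanishing covariation, since $s \mapsto e^{qs}$ has finite variation) gives
\[
\int_0^t e^{qs}\,dN_s = e^{qt} N_t - N_0 - q\int_0^t e^{qs} N_s\,ds.
\]
Substituting the definition of $N$ and applying Fubini to the resulting double integral,
\[
q\int_0^t e^{qs}\int_0^s e^{-qu}\,dL^x_u\,ds = e^{qt}\int_0^t e^{-qu}\,dL^x_u - L^x_t,
\]
forces the terms $e^{qt}\int_0^t e^{-qu}\,dL^x_u$ to cancel, leaving exactly
\[
\int_0^t e^{qs}\,dN_s = f(X_t) - f(X_0) - q\int_0^t f(X_s)\,ds + L^x_t = M^{q,x}_t.
\]
Because $e^{qs}$ is bounded on $[0,t]$ and $N$ is integrable (from $0 \leq f \leq r_q(0)$ and $\bE_y[L^x_t] \leq e^{qt} r_q(x-y)$ via Lemma \ref{lem3}), this stochastic integral is a genuine $(\cG_t)$-martingale, not merely a local one.

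The main delicate step in this plan is the shift identity $dL^x_u \circ \theta_s = dL^x_{u+s}$ for the Stieltjes measure of the local time: this is the one place where the occupation-density construction of $L^x$ (guaranteed by \textbf{(A1)} and \textbf{(A2)}) enters the argument in a nontrivial way. Once that identity is granted, everything that follows — splitting integrals, the product rule, Fubini, and the integrability check — is routine.
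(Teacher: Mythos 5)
Your proposal is correct and follows essentially the same route as the paper: the paper likewise shows that $e^{-qt}r_q(-X_t+x)+\int_0^t e^{-qu}\,dL^x_u$ coincides with the Doob martingale $\bE_{X_0}[\int_0^\infty e^{-qu}\,dL^x_u\,|\,\cG_t]$ via Lemma \ref{lem3} and the Markov property (your direct verification of the increment condition is the same computation), and then recovers $M^{q,x}$ by the identical integration by parts against $e^{qs}$ and Fubini argument. The only cosmetic difference is that the paper gets the martingale property of your $N$ for free from its identification as a conditional expectation, and checks the martingale property of the weighted process $e^{qt}Y_t-q\int_0^t e^{qs}Y_s\,ds$ directly by Fubini rather than invoking the semimartingale product rule.
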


\begin{proof}
By Lemma \ref{lem3} and the Markov property,
we have
\begin{align}\label{prop3-1}
\bE_{X_0}\left[ \int^\infty_0 e^{-qu}dL^x_u | \cG_s\right]
&=\int^s_0e^{-qu}dL^x_u + \bE_{X_s}\left[ \int^\infty_0 e^{-q(s+u)}dL^x_u\right] \notag
\\&=\int^s_0e^{-qu}dL^x_u + e^{-qs}r_q(-X_s+x).
\end{align}
Using the integration by parts, and by \eqref{prop3-1}, we obtain
\begin{align}\label{prop3-2}
&q\int^t_0e^{qs}\int^s_0 e^{-qu}dL^x_u ds \notag
\\&=e^{qt}\int^t_0 e^{-qu}dL^x_u - L^x_t \notag
\\&=e^{qt}\left(\bE_{X_0}\left[ \int^\infty_0 e^{-qu}dL^x_u | \cG_t\right]
 -e^{-qt}r_q(-X_t+x) \right) - L^x_t \notag
\\&=e^{qt}\bE_{X_0}\left[ \int^\infty_0 e^{-qu}dL^x_u | \cG_t\right]
 -r_q(-X_t+x) - L^x_t
\end{align}
Hence, by \eqref{prop3-1} and \eqref{prop3-2} we have
\begin{align}\label{prop3-3}
&r_q(-X_t+x)-q\int^t_0r_q(-X_s+x)ds+L^x_t \notag
\\&=-q\int^t_0e^{qs}\bE_{X_0}\left[\int^\infty_0 e^{-qu}dL^x_u | \cG_s \right]ds
 +e^{qt}\bE_{X_0}\left[ \int^\infty_0 e^{-qu}dL^x_u | \cG_t\right]
\end{align}
For the sake of simplicity of notations,
we shall write
\begin{align*}
Y_t&:=\bE_{X_0}\left[ \int^\infty_0 e^{-qu}dL^x_u | \cG_t\right],
\\ Z_t &:=-q\int^t_0e^{qs}Y_s ds+e^{qt}Y_t.
\end{align*}
Since we know $Z_0=r_q(-X_0+x)$,
by \eqref{prop3-3},
we will show that $Z_t$ is a martingale with respect to the natural filtration $\{\cG_t\}_{t\geq0}$.
By Fubini's theorem, we have for all $0 \leq v <t$,
\begin{align*}
\bE_{X_0}[Z_t | \cG_v]
&=-q\int^t_0 e^{qs}\bE_{X_0}[Y_s|\cG_v]ds+e^{qt}\bE_{X_0}[Y_t|\cG_v]
\\&=-q\int^v_0 e^{qs}Y_s ds-q\int^t_v e^{qs}Y_v ds+e^{qt}Y_v
\\&=-q\int^v_0 e^{qs}Y_s ds+e^{qv}Y_v
\\&=Z_v,
\end{align*}
and the required result follows. 
\end{proof}

Now we will establish the Tanaka formula for asymmetric L\'evy processes.
\begin{theorem}\label{thm2}
Suppose that the conditions \textbf{(A)} and \textbf{(B)} hold.
Let $h$ and $M^{q,x}$ be the same as in Theorem \ref{thm1} and Proposition \ref{prop3} respectively. Then, for each $t\geq0$ and $x \in \bR$,
it holds that
\begin{equation*}
h(X_t-x)=h(X_0-x)+\tilde{N}^x_t + L^x_t,
\end{equation*}
where $\tilde{N}^x_t := - \lim_{q\downarrow0}M^{q,x}_t$ is a martingale.
\end{theorem}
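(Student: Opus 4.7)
The plan is to pass to the limit $q \downarrow 0$ in the decomposition of Proposition \ref{prop3}. Using $h_q(y) = r_q(0) - r_q(-y)$, the telescoping $-r_q(-X_t+x) + r_q(-X_0+x) = h_q(X_t-x) - h_q(X_0-x)$ lets one rewrite Proposition \ref{prop3} as
\begin{equation*}
-M^{q,x}_t = \bigl(h_q(X_t-x) - h_q(X_0-x)\bigr) + q\int_0^t r_q(-X_s+x)\,ds - L^x_t.
\end{equation*}

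I would then analyze each term on the right-hand side as $q \downarrow 0$. Theorem \ref{thm1} supplies the pointwise convergence $h_q \to h$; the local time is $q$-independent; and from the integral representation in Corollary \ref{cor1} one obtains the uniform bound
\begin{equation*}
\sup_{y \in \bR} |q\,r_q(y)| \leq \frac{1}{\pi}\int_0^\infty \left|\frac{q}{q-\eta(u)}\right| du = \frac{1}{2\pi}\int_\bR \left|\frac{q}{q-\eta(u)}\right| du,
\end{equation*}
where the last equality uses $|q-\eta(-u)|=|q-\eta(u)|$ (a consequence of $\eta(-u)=\overline{\eta(u)}$). This tends to $0$ by Lemma \ref{lem4}(iv), so $q\int_0^t r_q(-X_s+x)\,ds \to 0$ pathwise and uniformly in $\omega$. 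Consequently $-M^{q,x}_t$ converges almost surely to $h(X_t-x) - h(X_0-x) - L^x_t$, and defining $\tilde{N}^x_t := -\lim_{q \downarrow 0} M^{q,x}_t$ gives the Tanaka identity after rearrangement.

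To close the argument, I must verify that $\tilde{N}^x$ is a martingale. Each $M^{q,x}$ is a martingale by Proposition \ref{prop3}, so it suffices to upgrade the almost-sure convergence to $L^1$-convergence, after which the conditional-expectation identity $\bE[M^{q,x}_t \mid \cG_s] = M^{q,x}_s$ passes to the limit by continuity. The drift term is controlled uniformly in $\omega$ by the bound above, and the local-time term is $q$-independent and integrable by Lemma \ref{lem3}, so the substantive task is $L^1$-convergence of $h_q(X_t-x)$ to $h(X_t-x)$.

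The main obstacle is exactly this last point, because $h_q$ is not uniformly bounded as $q\downarrow 0$ (note $\lim_{|y|\to\infty} h_q(y) = r_q(0)$, which may diverge). I would handle it via the partial-fraction identity
\begin{equation*}
\frac{1}{q-\eta(u)} + \frac{1}{\eta(u)} = \frac{q}{\eta(u)(q-\eta(u))},
\end{equation*}
which expresses $h_q - h$ as a single oscillatory integral in $u$ carrying the harmless factor $|q/(q-\eta(u))| \leq 1$. Swapping the $du$-integral with $\bE$ by Fubini and splitting into real and imaginary parts, one can dominate the integrand by the $q$-free envelopes $(u^2 \wedge 1)/|\eta(u)|$ and $|\Im(u/\eta(u))| \wedge |1/\eta(u)|$ --- the very envelopes whose integrability was already exploited in the proof of Theorem \ref{thm1} under assumptions \textbf{(A)} and \textbf{(B)}. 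Dominated convergence on the $u$-integral then yields the required $L^1$-convergence, and the martingale property of $\tilde{N}^x$ follows.
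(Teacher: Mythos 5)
Your overall architecture coincides with the paper's: rewrite Proposition \ref{prop3} in terms of $h_q$, kill the drift term via $q\,r_q(0)\to 0$ and Lemma \ref{lem4}(iv), and obtain the martingale property of $\tilde N^x$ by upgrading the a.s.\ convergence of $M^{q,x}_t$ to $L^1(\bP_0)$-convergence. Those parts are fine. The gap is in the one step you yourself identify as the substantive one, the $L^1$-convergence of $h_q(X_t-x)$ to $h(X_t-x)$.

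The envelopes you propose, $(u^2\wedge1)/|\eta(u)|$ and $|\Im(u/\eta(u))|\wedge|1/\eta(u)|$, bound the integrand only up to factors that grow in $y$: the real part is controlled by $(1-\cos(uy))/|\eta(u)|\le (y^2\vee 2)\,(u^2\wedge1)/|\eta(u)|$ and the imaginary part by $|\sin(uy)|\,|\Im(1/(q-\eta(u)))|\le |y|\,|\Im(u/\eta(u))|$. These are exactly the envelopes used for the \emph{pointwise} convergence in Theorem \ref{thm1}, where $y$ is fixed; but to dominate $h_q(X_t-x)$ by a $\bP_0$-integrable random variable you would need $\bE_0[|X_t|]$ or $\bE_0[X_t^2]$ to be finite, which conditions \textbf{(A)} and \textbf{(B)} do not provide (the stable processes of Example \ref{ex1} have no second moment). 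If instead you push $\bE_0$ inside the $u$-integral first, you must justify Fubini for a signed integrand --- which is essentially the estimate you are trying to prove --- and even granting it you only obtain $\bE_0[h_q(X_t-x)]\to\bE_0[h(X_t-x)]$, which is weaker than $L^1$-convergence unless you additionally invoke Scheff\'e's lemma via $h_q\ge0$; you do neither. The paper's resolution is a symmetrization you are missing: since $h_q\ge 0$, one has $h_q(y)\le h_q(y)+h_q(-y)=\frac{2}{\pi}\int_0^\infty\Re\bigl(\frac{1-\cos(uy)}{q-\eta(u)}\bigr)du\le\frac{2}{\pi}\int_0^\infty\frac{1-\cos(uy)}{|\eta(u)|}du=:g(y)$, which eliminates the troublesome odd part entirely and leaves a non-negative integrand to which Tonelli applies with no justification; $\bE_0[g(X_t-x)]$ is then computed through the characteristic function and shown finite using $1-\cos(\theta)e^{-s}\le(1-\cos\theta)+s$, $(\Im\eta)^2\le|\eta|^2$ and Lemma \ref{lem4}(ii),(iii). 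Dominated convergence with the $q$-free dominator $g(X_t-x)$ then yields the required $L^1$-convergence. Your partial-fraction identity is a clean way to see the pointwise limit, but it does not by itself produce an integrable dominating random variable.
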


\begin{proof}
From the Doob--Meyer decomposition (Proposition \ref{prop3}),
let $q \downarrow 0$ and by Theorem \ref{thm1},
then we have
\begin{equation*}
h(X_t-x)=h(X_0-x)
-\lim_{q \downarrow 0}\left(M^{q,x}_t+q\int^t_0 r_q(-X_s+x)ds\right)
+L^x_t.
\end{equation*}
Recall that $0\leq r_q(y)\leq r_q(0)$ for all $y \in \bR$,
and then,
\begin{equation*}
0 \leq q\int^t_0 r_q(-X_s+x)ds \leq qr_q(0)t.
\end{equation*}
Hence, by Lemma \ref{lem4}(iv),
\begin{equation}\label{thm2-1}
q\int^t_0 r_q(-X_s+x)ds \to 0 \quad \text{as $q \downarrow 0$}.
\end{equation}
It remains to show that $\tilde{N} := -\lim_{q\downarrow0}M^{q,x}$ is a martingale.
Thus, we will prove that
\begin{equation*}
\bE_0|\tilde{N}^x_t-M^{q,x}_t| \to 0 \quad \text{as $q \downarrow 0$}.
\end{equation*}
We know that
\begin{align*}
|\tilde{N}^x_t-M^{q,x}_t| &\leq |h(X_t-x)-h_q(X_t-x)| +|h(X_0-x)-h_q(X_0-x)|
\\&\quad +q\int^t_0r_q(-X_s+x)ds.
\end{align*}
By Theorem \ref{thm1}, the second term of the above right-hand side goes to $0$ as $q \downarrow 0$.
By \eqref{thm2-1},
the last term converges to $0$ as $q \downarrow 0$.
It remains to prove the convergence of the first term as $q \downarrow 0$.
Thus, it is enough to show that $h_q(X_t-x)$ converges in $L^1(d\bP_0)$ to $h(X_t-x)$ as $q \downarrow 0$.
Since $h_q(y)\geq0$ for any $y \in \bR$, we have
\begin{align*}
h_q(x) &\leq h_q(x) + h_q(-x)
\\&=\frac{2}{\pi}\int^\infty_0 \Re\left( \frac{1-\cos(ux)}{q-\eta(u)} \right)du
\\&\leq \frac{2}{\pi}\int^\infty_0 \frac{1-\cos(ux)}{|\eta(u)|}du.
\end{align*}
Using Fubini's theorem, Lemma \ref{lem4}(ii) and (iii),
we have
\begin{align*}
&\bE_0\left[ \int^\infty_0 \frac{1-\cos(u(X_t-x))}{|\eta(u)|}du\right]
\\&=\int^\infty_0 \frac{1-\Re\exp \left(t\eta(u)-iux\right)}{|\eta(u)|} du
\\&=\int^\infty_0 \frac{1-\cos(t\Im\eta(u)-ux)\exp\left(t\Re\eta(u)\right)}{|\eta(u)|} du
\\&\leq \int^1_0 \frac{1-\cos(t\Im\eta(u)-ux)-t\Re\eta(u)}{|\eta(u)|} du
+\int^\infty_1 \left|\frac{2}{\eta(u)}\right|du
\\&\leq \int^1_0 \frac{\left(t\Im\eta(u)-ux\right)^2}{|\eta(u)|} du
 +\int^\infty_1 \left|\frac{2}{\eta(u)}\right|du + t
\\&\leq 2\int^1_0 \frac{\left(t\Im\eta(u)\right)^2+ (ux)^2}{|\eta(u)|} du
 +\int^\infty_1 \left|\frac{2}{\eta(u)}\right|du + t <\infty.
\end{align*}
Hence, it follows from the dominated convergence theorem.
Therefore,
\begin{equation*}
\bE_0 |\tilde{N}^x_t-M^{q,x}_t| \to 0 \quad \text{as $q \downarrow 0$}.
\end{equation*}
The proof is now complete.
\end{proof}

\begin{remark}
\rm{
From Theorem \ref{thm2}, we obtain the invariant excessive function with respect to the killed process.
Indeed, when we denote the law of the process starting at $x$ killed upon hitting zero and the corresponding expectation by $\bP^0_x$ and $\bE^0_x$ respectively,
under the condition \textbf{(A)} and \textbf{(B)},
we have,
\begin{equation*}
\bE^0_x[h(X_t)]=h(x),
\end{equation*}
for all $t\geq0$ and $x \in \bR$.
}
\end{remark}

\section{Examples}\label{sec5}
We shall introduce examples satisfying the conditions \textbf{(A)} and \textbf{(B)}.
Because the condition \textbf{(A)} is a sufficient condition to have local times and explicit resolvent densities,
we give examples with a focus on satisfying the condition \textbf{(B)}.

\begin{example}[Stable process]\label{ex1}
\rm{
Let $X$ be an asymmetric stable process with index $\a l\in (1,2)$.
The L\'evy measure $\nu$ on $\bR \setminus \{0\}$ is given by
\begin{equation*}
\nu(dy) =
  \begin{cases}
  c_+|y|^{-\al-1}dy &\quad \text{on $(0,\infty)$}, \\
  c_-|y|^{-\al-1}dy &\quad \text{on $(-\infty,0)$},
  \end{cases}
\end{equation*}
where $\al \in (1,2)$, and $c_+$ and $c_-$ are non-negative constants such that $c_+ + c_- > 0$.
The L\'evy symbol $\eta$ of $X$ is represented as
\begin{equation*}
\eta(u) = -d|u|^\al \left(1-i\be\sgn(u)\tan\frac{\pi \al}{2}\right),
\end{equation*}
where $d > 0$ and $\be \in [-1,1]$ are given by
\begin{equation*}
d=\frac{c_+ +c_-}{2c(\al)}, \quad \be=\frac{c_+-c_-}{c_+ +c_-}
\end{equation*}
with
\begin{equation*}
c(\al)=\frac{1}{\pi}\Ga(\al+1)\sin\frac{\pi \al}{2}.
\end{equation*}
See \cite{Sat} on details.

Since we have for all $q>0$,
\begin{align*}
\left|\frac{1}{q-\eta(u)}\right|
\leq \frac{1}{q-\Re\eta(u)}
=\frac{1}{q+d|u|^{\al}},
\end{align*}
and $\al \in(1,2)$,
the process $X$ satisfies the condition \textbf{(A)}.

Since we have for all $0<u\leq1$,
\begin{align*}
\left|\Im\left(\frac{u}{\eta(u)}\right)\right|
\leq \left|\frac{u}{\eta(u)}\right|
\leq \frac{u}{|\Re\eta(u)|}
=\frac{1}{d}|u|^{1-\al},
\end{align*}
by $1-\al \in(-1,0)$,
the process $X$ satisfies the condition \textbf{(B)}.

In this case, it can be represented by
\begin{equation*}
h(x) = c(-\al) \frac{1 - \be \sgn(x)}
{d\left(1 + \be^2 \tan^2 (\pi \al \slash 2)\right)} |x|^{\al - 1}.
\end{equation*}
The result is consistent with \cite{Tsu}.
}
\end{example}

\begin{remark}
\rm{
In \cite{Tsu}, by using the Fourier transform,
we could find the fundamental solution $F$ of the infinitesimal generator for a stable process $S =(S_t)_{t\geq0}$ with index $\al \in(1,2)$.
Moreover, we have $F(x)=h(x)$ for all $x\in\bR$.
In \cite{Tsu}, since we used It\^o's stochastic calculus,
we have the martingale part $\tilde{N}^x_t$ of the Tanaka formula can be represented as the explicit form:
\[ \tilde{N}^x_t :=\int^t_0\int_{\bR\setminus\{0\}}\{F(S_{s-}-x+h)-F(S_{s-}-x)\}
\tilde{N}(ds,dh). \]
Thus, we could study the property of local times from the Tanaka formula.
On the other hand, for general L\'evy processes, even if the renormalized zero resolvent and the local time exist, we could not use It\^o's stochastic calculus, because we do not know the explicit form of the renormalized zero resolvent.
}
\end{remark}

\begin{example}[Truncated stable process]\label{ex2}
\rm{
A truncated stable process is a L\'evy process with the L\'evy measure $\nu$ on $\bR \setminus \{0\}$ given by
\begin{equation*}
\nu(dy) =
  \begin{cases}
  c_+|y|^{-\al -1}1_{\{y\leq1\}}dy &\quad \text{on $(0,\infty)$}, \\
  c_-|y|^{-\al -1}1_{\{y\geq-1\}}dy &\quad \text{on $(-\infty,0)$},
  \end{cases}
\end{equation*}
where $\al \in (1,2)$, and $c_+$ and $c_-$ are non-negative constants such that $c_+ + c_- > 0$.

Since we know $1-\cos(x) \geq x^2 \slash 4$ for $|x| \leq1$,
we have for all $u\geq1$,
\begin{align*}
-\Re\eta(u)&=\int_{\bR\setminus\{0\}} \left(1-\cos(uy)\right)\nu(dy)
\\&\geq \frac{1}{4}\int_{|y| \leq u^{-1}} (uy)^2\nu(dy)
\\&=\frac{c_++c_-}{4}\int^{u^{-1}}_0 u^2y^{-\al+1}dy
\\&=\frac{c_++c_-}{4(2-\al)}u^{\al}.
\end{align*}
We then have for all $q>0$,
\begin{align*}
\int^\infty_0 \left|\frac{1}{q-\eta(u)}\right|du
&\leq \int^\infty_0 \frac{1}{q-\Re\eta(u)}du
\\&\leq \frac{1}{q} + \frac{4(2-\al)}{c_++c_-}\int^\infty_1u^{-\al}du 
< \infty,
\end{align*}
by $-\al \in (-2,-1)$.
Thus, the process $X$ satisfies the condition \textbf{(A)}.

Since $|\sin(x)-x| \leq |x|^3$ for all $x\in\bR$,
we have for all $0 < u \leq1$,
\begin{align*}
\left| \frac{\Im\eta(u)}{u^3} \right|
&=\left| \int_{|y|\leq1}\frac{\sin(uy)-uy}{u^3}\nu(dy)\right|
\\&\leq \int_{|y|\leq1} \left| \frac{\sin(uy)-uy}{u^3}\right| \nu(dy)
\\&\leq \int_{|y|\leq1}|y|^3\nu(dy) < \infty.
\end{align*}
Thus, the process $X$ satisfies the condition \textbf{(B)}.
}
\end{example}

\begin{remark}
\rm{
If a L\'evy measure has a bounded support,
the condition \textbf{(B)} holds by the same argument as stated Example \ref{ex2}.
}
\end{remark}

\begin{example}[Tempered stable process]\label{ex3}
\rm{
A tempered stable process is a L\'evy process
with the L\'evy measure $\nu$ on $\bR \setminus \{0\}$ is given by
\begin{equation*}
\nu(dy) =
  \begin{cases}
  c_+|y|^{-\al_+ -1}e^{-\la_+ |y|}dy &\quad \text{on $(0,\infty)$}, \\
  c_-|y|^{-\al_- -1}e^{-\la_- |y|}dy &\quad \text{on $(-\infty,0)$},
  \end{cases}
\end{equation*}
where $\al_+, \al_- \in (1,2)$, and $c_+$, $c_-$, $\la_+$ and $\la_-$ are non-negative constants such that $c_+ + c_- > 0$.
The processes have studied as models for stock price behavior in finance.
See Carr et al. \cite{Car} on details.

Since we have for all $u\geq1$,
\begin{align*}
-\Re\eta(u)
&\geq \frac{1}{4}\int_{|y|\leq u^{-1}}(uy)^2\nu(dy)
\\&\geq \frac{u^2}{4}\left(c_+ e^{-\la_+}\int^{u^{-1}}_0 y^{-\al_+ +1}dy
+ c_- e^{-\la_-}\int^{u^{-1}}_0 y^{-\al_- +1}dy\right)
\\&=\frac{c_+ e^{-\la_+}}{4(2-\al_+)}u^{\al_+}
+ \frac{c_- e^{-\la_-}}{4(2-\al_-)}u^{\al_-},
\end{align*}
by $\al_+, \al_- \in (1,2)$,
the process $X$ satisfies the condition \textbf{(A)}.

In the case of $\la_+, \la_- >0$, or $c_+=0, \la_->0$, or $ c_-=0, \la_+>0$,
since $|\sin(x)-x| \leq |x|^3$ for all $x\in\bR$,
we have for all $0< u \leq1$,
\begin{align*}
\left| \frac{\Im\eta(u)}{u^3}\right|
\leq \int_{\bR \setminus \{0\}} \left| \frac{\sin(uy)-uy}{u^3} \right|\nu(dy)
\leq \int_{\bR \setminus \{0\}} |y|^3 \nu(dy) <\infty.
\end{align*}
Thus, this case satisfies the condition \textbf{(B)}.

In the case of $c_+>0, \la_+ =0$,
by the same argument as Example \ref{ex1},
we have for all $u \in \bR$,
\begin{align*}
-\Re\eta(u)
\geq \int^\infty_0 \left(1-\cos(uy)\right)\nu(dy)
=\frac{c_+}{2c(\al)}|u|^{\al_+}
\end{align*}
where $c(\al)=(1\slash\pi)\Ga(\al+1)\sin(\pi \al \slash 2)$.
Since we have for $0<u\leq1$,
\begin{align*}
\left|\Im\left(\frac{u}{\eta(u)}\right)\right|
=\left|\frac{u\Im\eta(u)}{(\Re\eta(u))^2+(\Im\eta(u))^2}\right|
\leq \frac{u}{2|\Re\eta(u)|}
\leq \frac{c(\al)}{c_+}u^{1-\al_+},
\end{align*}
by $1-\al_+\in (-1,0)$,
this case satisfies the condition \textbf{(B)}.

In the case of $c_->0, \la_-=0$,
the condition \textbf{(B)} holds by the same argument as the case of $\la_+=0$.

Thus, the process $X$ satisfies the condition \textbf{(B)}.
}
\end{example}

\begin{example}\label{ex4}
\rm{
Suppose that the condition \textbf{(A)} holds,
and that a L\'evy measure $\nu$ satisfies $\int_{|y|>1}|y|\nu(dy)<\infty$ and
$b \neq -\int_{|y|>1}y\nu(dy)$.

Since we have
\[ \left|\Im\left(\frac{u}{\eta(u)}\right)\right| \leq \left|\frac{u}{\Im\eta(u)}\right|, \]
and $|\sin(x)-x1_{|x|\leq1}|\leq|x|^3 \wedge |x|$ for all $x \in \bR$,
we have
\begin{align*}
\left|\frac{\Im\eta(u)}{u}\right|
&=\left|b + \int_{|y|\leq1}\frac{\sin(uy)-uy}{u}\nu(dy) +\int_{|y|>1}\frac{\sin(uy)}{u}\nu(dy)\right|
\\&\to \left| b+\int_{|y|>1}y\nu(dy)\right| >0,
\end{align*}
as $q \downarrow 0$.
By the dominated convergence theorem, the process $X$ satisfies the condition \textbf{(B)}.
}
\end{example}

\begin{example}[Spectrally positive or negative process]\label{ex5}
\rm{
A L\'evy process with no positive (negative) jumps is called a spectrally negative (positive) process. The processes have studied as models for insurance risk and dam theory.

Suppose that the condition \textbf{(A)} holds,
and that a L\'evy measure $\nu$ has a support in $(-\infty,0)$ and satisfies $\int_{|y|>1}|y|\nu(dy)<\infty$.
The processes are integrable spectrally negative processes satisfying the condition \textbf{(A)}.

In the case of $b \neq -\int_{|y|>1}y\nu(dy)$,
the process is one of Example \ref{ex4}.

We consider the case of $b = -\int_{|y|>1}y\nu(dy)$.
Since we have for all $x\in\bR$,
\begin{align*}
0 &\leq h_q(x)
\leq h_q(x)+h_q(-x)
\\&=\frac{2}{\pi}\int^\infty_0 \Re\left(\frac{1-\cos(ux)}{q-\eta(u)}\right)du.,
\end{align*}
by Lemma \ref{lem4}(ii) and (iii), we have
\begin{align*}
&\left|\int^1_0\Im\left(\frac{\sin(u)}{q-\eta(u)}\right)du\right|
\\&\leq \left|\int^\infty_0\Re\left(\frac{1-\cos(u)}{q-\eta(u)}\right)du\right|
+ \left|\int^\infty_1\Im\left(\frac{\sin(u)}{q-\eta(u)}\right)du\right|
\\&\leq \int^\infty_0\frac{|u|^2\wedge1}{|\eta(u)|}du
+ \int^\infty_1\left|\frac{1}{\eta(u)}\right|du <\infty.
\end{align*}
Since $\Im\eta(u) \geq 0$ for all $u \geq0$,
we have for all $0<u\leq1$,
\begin{align*}
\Im\left(\frac{\sin(u)}{q-\eta(u)}\right)
= \frac{\Im\eta(u)\sin(u)}{(q-\Re\eta(u))^2+(\Im\eta(u))^2}
\end{align*}
is increasing as $q \downarrow 0$,
by the monotone convergence theorem,
the condition \textbf{(B)} follows.

Integrable spectrally positive processes satisfying the condition \textbf{(A)}
also satisfy the condition \textbf{(B)}
by same argument as the spectrally negative case.
}
\end{example}

\begin{remark}
\rm{
Example \ref{ex1} also satisfies the condition \textbf{(L3)}.
But Example \ref{ex2} and \ref{ex3} do not satisfy the condition \textbf{(L3)},
and in Example \ref{ex4} and \ref{ex5} there exists processes that do not satisfy the condition \textbf{(L3)}.
}
\end{remark}

\section*{Acknowledgements}
I would like to thank Professor Atsushi Takeuchi of Osaka City University and Professor Kouji Yano of Kyoto University for their valuable advice.


\begin{thebibliography}{99}

\bibitem{Bert}
J. Bertoin,
L\'evy processes. 
Cambridge Tracts in Mathematics, 121,
Cambridge University Press, Cambridge, 1996.

\bibitem{Blu}
R. M. Blumenthal and R. K. Getoor,
{\it Local times for Markov processes},
Probability Theory and Related Fields, 3(1)
(1964), pp. 50--74.

\bibitem{Bre}
J. Bretagnolle,
{\it R\'esultats de Kesten sur les processus \`a accroissements ind\'ependants},
S\'eminaire de Probabilit\'es V,
Lecture Notes in Math., 191,
Springer, Berlin, (1971), pp. 21--36.

\bibitem{Car}
P. Carr, H. Geman, D. Madan and M. Yor,
{\it The fine structure of assert returns: An empirical investigation},
Journal of Business, 75
(2002), pp. 305--332.

\bibitem{Chu}
K. L. Chung and R. J. Williams,
Introduction to stochastic integration.
Second edition.
Birkh\"auser, Boston, 1990.

\bibitem{Eis}
N. Eisenbaum, H. Kaspi, M. B. Marcus, J. Rosen and Z. Shi,
{\it A Ray--Knight theorem for symmetric Markov processes},
Ann. Probab., 28
(2000), pp. 1781--1796.

\bibitem{Gem}
D. Geman and J. Horowitz,
{\it Occupation densities},
Ann. Probab., 8
(1980), pp. 1--67.

\bibitem{Get}
R. K. Getoor and H. Kesten,
{\it Continuity of local times of Markov processes},
Compositio Math., 24
(1972), pp. 277--303.

\bibitem{Kes}
H. Kesten,
{\it Hitting probabilities of single points for processes with stationary independent increments},
Memoirs, No. 93,
American Mathematical Society, (1969).

\bibitem{Pan}
H. Pant\'i,
{\it On L\'evy processes conditioned to avoid zero},
arXiv preprint,
arXiv:1304.3191v2 (2015). 

\bibitem{Sal}
P. Salminen and M. Yor,
{\it Tanaka formula for symmetric L\'evy processes},
S\'eminaire de Probabilit\'es XL,  
Lecture Notes in Math., 1899, 
Springer, Berlin, (2007), pp. 265--285.

\bibitem{Sat}
K. Sato,
L\'evy processes and infinitely divisible distributions.
Cambridge Studies in Advanced Mathematics, 68,
Cambridge University Press, Cambridge, 1999.

\bibitem{Tsu}
H. Tsukada,
{\it Tanaka formula for stable processes},
submitted.

\bibitem{Yam}
K. Yamada,
{\it Fractional derivatives of local times of $ \alpha $-stable Levy processes as the limits of occupation time problems},
Limit theorems in probability and statistics, Vol. II (Balatonlelle, 1999),
J\'anos Bolyai Math. Soc., Budapest, (2002), pp. 553--573.

\bibitem{Yan1}
K. Yano,
{\it Excursions away from a regular point for one-dimensional symmetric L\'evy processes without Gaussian part},
Potential Anal. 32(4) (2010), pp. 305--341.

\bibitem{Yan2}
K. Yano,
{\it On harmonic function for the killed process upon hitting zero of asymmetric L\'evy processes},
J. Math-for-Ind. 5A (2013), pp. 17--24.
\end{thebibliography}
\end{document}